\newcommand{\bdism}{\begin{displaymath}}
\newcommand{\edism}{\end{displaymath}}
\newcommand{\cc}{\mathbb{C}}
\newcommand{\rr}{\mathbb{R}}
\newcommand{\qq}{\mathbb{Q}}
\newcommand{\pp}{\mathbb{P}}
\newcommand{\oo}{\mathcal{O}}
\newcommand{\T}{\mathcal{T}}
\newcommand{\D}{\mathcal{D}}
\DeclareMathOperator{\vol}{vol}
\DeclareMathOperator{\Eff}{\overline{Eff}}
\DeclareMathOperator{\Bag}{Big}
\DeclareMathOperator{\Nef}{Nef}
\newtheorem{theorem}{Theorem}[section]
\newtheorem{corollary}[theorem]{Corollary}
\newtheorem{lemma}[theorem]{Lemma}
\newtheorem{remark}[theorem]{Remark}
\newtheorem{definition}[theorem]{Definition}
\address{Department of Mathematics, Columbia University, New York NY 10027, USA} 
\email{dicerbo@math.columbia.edu}
\author{\scshape Gabriele Di Cerbo}
\title{\bf On Fujita's spectrum conjecture}
\begin{document}
\pagestyle{headings}
\begin{abstract}
We prove Fujita's spectrum conjecture on the discreteness of pseudo-effective thresholds for polarized varieties.
\end{abstract}
\maketitle

\tableofcontents

\section{Introduction}
\pagenumbering{arabic}

Let $X$ be a smooth projective variety over $\cc$. The minimal model program has set a now standard way to study the geometry of $X$ through the use of several cones of divisors associated to the variety. For example, if we want to understand varieties with non-nef canonical class, we consider the cone of nef divisors $\Nef(X)$ and we try to estimate how far is the canonical class of $X$ from the cone. A way of measuring that is given by the nef threshold of an ample divisor $H$ with respect to $X$. Roughly speaking, we fix a rational point in the interior of $\Nef(X)$ and we study when the line connecting the point and the class of $K_X$ meets the boundary of $\Nef(X)$. Letting the ample divisor $H$ vary, we can deduce important information on some parts of $\Nef(X)$. This is the idea behind the Cone Theorem, which is basically equivalent to the statement that the nef threshold is always a rational number with bounded numerator.

\begin{theorem}\label{cone}\cite[Theorem 3.5]{KM}
Let $X$ be a smooth projective variety and let $H$ be an ample divisor. Let $n(X,H)=\inf\{t\geq 0 \: | \: K_X +t H \in \Nef(X) \}$. Then $n(X,H)$ is a rational number with numerator bounded by $\dim(X)+1$.
\end{theorem}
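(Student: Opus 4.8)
The plan is to derive Theorem~\ref{cone} from the geometric form of the Cone Theorem, i.e. from the decomposition of the cone of curves together with the length bound on extremal rational curves; granting that input, the numerical statement is short. First dispose of the trivial case: if $K_X$ is nef then $n(X,H)=0=0/1$ and there is nothing to prove, so assume from now on that $K_X$ is not nef. Since $X$ is smooth, $K_X$ is Cartier, and $H$ is an ample Cartier divisor. The set $\{t\ge 0 : K_X+tH\in\Nef(X)\}$ is closed (the nef cone is closed) and upward closed (the sum of a nef and an ample divisor is ample, hence nef), so it equals $[n(X,H),\infty)$; in particular $n:=n(X,H)>0$, the infimum is attained, $K_X+nH$ is nef, and $K_X+tH$ is not nef for $0\le t<n$.

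Next, invoke the Cone Theorem in the form: there are rational curves $C_i\subset X$ with $0<-K_X\cdot C_i\le\dim(X)+1$ such that
\[
\overline{NE}(X)=\overline{NE}(X)_{K_X\ge 0}+\sum_i \rr_{\ge 0}[C_i],
\]
and for every $\varepsilon>0$ only finitely many of the rays $\rr_{\ge 0}[C_i]$ meet $\{(K_X+\varepsilon H)<0\}$. For $t\ge 0$ the divisor $K_X+tH$ is nef if and only if $(K_X+tH)\cdot C_i\ge 0$ for all $i$ — it is automatically non-negative on $\overline{NE}(X)_{K_X\ge 0}$ because $H$ is ample — equivalently $t\ge t_i:=(-K_X\cdot C_i)/(H\cdot C_i)$ for all $i$. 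Comparing this with $\{t:K_X+tH\text{ nef}\}=[n,\infty)$ gives $n=\sup_i t_i$. Applying the local finiteness with $\varepsilon=n/2$, the supremum ranges over the finitely many rays with $t_i>n/2$ (there is at least one, since $\sup_i t_i=n>n/2$), so it is a maximum: $n=t_{i_0}=(-K_X\cdot C_{i_0})/(H\cdot C_{i_0})$ for some $i_0$. In particular $n(X,H)$ is rational.

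To bound the numerator, write $n=p/q$ with $p,q$ coprime positive integers, and put $a:=-K_X\cdot C_{i_0}$ and $b:=H\cdot C_{i_0}$, which are positive integers with $a\le\dim(X)+1$ (here we use that $C_{i_0}$ is a curve and $K_X,H$ are Cartier). From $p/q=a/b$ we get $pb=qa$, hence $p\mid qa$; since $\gcd(p,q)=1$ this forces $p\mid a$, and therefore $p\le a\le\dim(X)+1$. This completes the argument.

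The one genuinely hard ingredient is precisely the input just used, namely the cone decomposition and, above all, the length estimate $-K_X\cdot C_i\le\dim(X)+1$ for the extremal rational curves: this is exactly where Mori's bend-and-break, via reduction to positive characteristic, enters, and it is what pins the bound to $\dim(X)+1$ rather than to a cruder effective constant. A route that sidesteps the full Cone Theorem is to prove the rationality of $n(X,H)$ directly through Kawamata's Rationality Theorem, controlling the base loci of the systems $|m(qK_X+pH)|$ by Kawamata--Viehweg vanishing and asymptotic Riemann--Roch; but even then the sharp numerator bound is obtained by exhibiting the extremal rational curve as above, so the curve-theoretic input cannot really be dispensed with.
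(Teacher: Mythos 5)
Your argument is correct as far as it goes, but it runs in the opposite logical direction from the result the paper actually cites. The reference is to the Rationality Theorem \cite[Theorem 3.5]{KM}, which proves directly that $r=\max\{t: H+tK_X\ \text{nef}\}$ is rational of the form $u/v$ with $0<v\le \dim(X)+1$ (for $X$ smooth), by the characteristic-zero basepoint-free technique: one studies the polynomial $P(x,y)=\chi(X,xH+yK_X)$, uses Kawamata--Viehweg vanishing and non-vanishing to control which lattice points near the critical line $x+ry=0$ give nonzero sections, and extracts the denominator bound from an elementary lemma on polynomials not vanishing on such lattice points. Taking reciprocals gives exactly the statement here, with no rational curves involved. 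In Koll\'ar--Mori's development this Rationality Theorem is an \emph{input} to the Cone Theorem, not a consequence of it; your proof instead takes the full Cone Theorem with Mori's length estimate $0<-K_X\cdot C_i\le\dim(X)+1$ (bend-and-break, reduction mod $p$) as given and deduces the threshold statement by the clean finiteness-plus-divisibility argument you give. That deduction is sound: the local finiteness of extremal rays in the region $(K_X+\varepsilon H)<0$ does make the supremum of the $t_i$ a maximum, and $p\mid a$ correctly pins the reduced numerator. What your route buys is a transparent geometric picture (the threshold is computed on an explicit extremal rational curve); what the cited route buys is independence from bend-and-break and a statement valid for klt pairs. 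Your closing claim that the sharp numerator bound ``cannot really be dispensed with'' the curve-theoretic input is the one inaccuracy worth flagging: the vanishing-theorem proof of \cite[Theorem 3.5]{KM} already yields the bound $\dim(X)+1$ on its own.
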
 

In particular, the set of nef thresholds is finite away from $0$. Theorem \ref{cone} is a fundamental result in birational geometry and it is the start of the minimal model program.  We refer to \cite{KM} for a proof of the theorem and much more on the subject.

For a different set of problems, it is more useful to understand the geometry of the cone of pseudo-effective divisors $\Eff(X)$. It is defined as the closure of the convex cone spanned by the classes of all effective $\rr$-divisors in $N^{1}(X)_{\rr}$. Moreover, the interior of $\Eff(X)$ is the cone of big divisors $\Bag(X)$, i.e. divisors with maximal Kodaira dimension. We can similarly define the pseudo-effective threshold to be $\tau (X,H)=\inf\{t\geq 0 \: | \: K_X +t H \in \Eff(X) \}$. It is again a way of measuring how far is the canonical class from the cone of pseudo-effective divisors and in particular, it is of interest only for varieties with not pseudo-effective canonical class, or equivalently for uniruled varieties.

Fujita realized that in order to study the adjunction theory for polarized varieties we need an analogue of Theorem \ref{cone} for the cone of pseudo-effective divisors. In his famous papers \cite{Fujita1} and \cite{Fujita2}, he conjectured that the set of pseudo-effective thresholds should behave in a similar way to the set of nef thresholds and, more precisely, it should be finite away from $0$ as well. This is known as Fujita's spectrum conjecture and its proof is the main result of this paper. 

\begin{theorem}\label{fujita}
Fix $n\geq 1$. Let $S_{n}$ be the set of pseudo-effective thresholds $\tau(X,H)$ of an ample divisor $H$ with respect to a smooth variety $X$. Then $S_{n}\cap [\epsilon, \infty)$ is a finite set for any $\epsilon >0$.  
\end{theorem}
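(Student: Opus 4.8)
The plan is to reduce the finiteness of pseudo-effective thresholds to a boundedness statement about the varieties (or pairs) that occur, and then invoke recent boundedness results in the spirit of the BAB conjecture. More precisely, I would first reformulate: if $\tau(X,H) = t$, then $K_X + tH$ lies on the boundary of $\Eff(X)$, so it is a pseudo-effective but not big divisor. The key observation is that one wants to run a minimal model program on $K_X + sH$ for $s$ slightly larger than $t$, or rather on the pair $(X, \text{(something close to } tH))$, to arrive at a Mori fiber space, and then track $\tau$ through this process. The upshot should be that $\tau(X,H)$ is computed on a variety of Fano type, and the bounded-numerator phenomenon of Theorem~\ref{cone} combined with boundedness of $\epsilon$-lc Fano varieties controls the denominators as well.

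\medskip

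The main steps I anticipate are the following. First, I would reduce to the case where $X$ is itself a Mori fiber space, or at least admits a fibration structure, using the MMP: given $H$ ample and $t = \tau(X,H)$ close to but bounded below by $\epsilon$, the divisor $K_X + (t-\delta)H$ is not pseudo-effective for small $\delta>0$, so running the $K_X$-MMP with scaling of $H$ terminates with a Mori fiber space $f\colon Y \to Z$, and the threshold is related to the nef threshold on the general fiber plus contributions from the base. Second, on a Mori fiber space, I would argue that the relevant fibers are bounded: the general fiber $F$ is a Fano variety, and crucially the multiple of the polarization restricted to $F$ that makes $K_F + t H|_F$ sit on the boundary of the (now smaller-dimensional, or zero-dimensional) effective cone forces $F$ to have bounded volume, or equivalently $-K_F$ is "not too positive" relative to $H|_F$; this is where one needs an effective non-vanishing / boundedness input. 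Third, I would set up an induction on dimension: the contribution to $\tau$ from the base $Z$ is itself (after adjunction, using the canonical bundle formula) a pseudo-effective threshold in dimension $\dim Z < \dim X$, with an auxiliary boundary coming from the discriminant and moduli divisors of the fibration, so one needs the inductive statement in the category of pairs with controlled coefficients (a set satisfying the descending chain condition).

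\medskip

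Concretely, I expect the proof to proceed via the following chain: (i) pass from smooth $X$ to the pair $(X, \Delta)$ framework where $\Delta$ has coefficients in a DCC set, since the canonical bundle formula introduces fractional boundaries; (ii) prove that $\tau$ being bounded below by $\epsilon$ forces, through the MMP and the structure of the Mori fiber space, that $X$ (or its MMP output) is a Fano-type variety with $\epsilon'$-log canonical singularities for some $\epsilon'$ depending on $\epsilon$ and $n$; (iii) invoke boundedness of such varieties (Birkar's theorem on the BAB conjecture, or the relevant intermediate results) to conclude that $(X,H)$ ranges over a bounded family, whence the numerical invariant $\tau(X,H) = (-K_X \cdot \text{curve})/(H\cdot\text{curve})$-type quantity takes finitely many values; (iv) handle the finitely many "boundary" cases where the threshold is computed on a lower-dimensional variety by the inductive hypothesis.

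\medskip

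The hard part will be step (ii): showing that a lower bound $\tau(X,H) \geq \epsilon$ actually implies a \emph{uniform} log canonical threshold / $\epsilon'$-lc bound on the geometry. A priori, $X$ could be very singular after the MMP, or the Mori fiber space could have a base whose geometry is uncontrolled, and there is no immediate reason the discriminant of the fibration has bounded complexity. Controlling the discriminant divisor and the moduli part of the canonical bundle formula — keeping its coefficients in a DCC set and ensuring the induction closes — is the delicate technical heart, and it is precisely where one must appeal to the modern machinery: global ACC for log canonical thresholds, the ACC for numerically trivial divisors / the generalized polarized pair formalism, and boundedness of Fano fibrations. I would organize the paper so that Theorem~\ref{cone} provides the base case / the "numerator" half of the statement, and the BAB-type boundedness provides the "denominator" half, with the canonical bundle formula as the bridge between dimensions.
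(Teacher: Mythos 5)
Your overall architecture --- run the MMP to reach a Mori fiber space, reduce to a Fano-type situation, invoke Birkar's boundedness of $\epsilon$-lc Fanos, and extract $\tau$ as a numerical invariant taking finitely many values on a bounded family --- is the same as the paper's. But your step (iii)/(iv), where you propose to handle the base $Z$ of the Mori fiber space via the canonical bundle formula, tracking discriminant and moduli divisors through an induction on dimension, is not what the paper does and is precisely the difficulty the paper avoids. In Theorem \ref{structure} the threshold is computed \emph{entirely on the generic fiber} $F_x$ of the Mori fiber space obtained from the $(K_X+x\Delta)$-MMP for $x<\tau$: one gets $x<\tau(F_x,\Delta_{F_x})\leq\tau$, and the ACC for pseudo-effective thresholds (the author's earlier result in \cite{DiC12}, applied to the increasing sequence $\tau(F_{x_i},\Delta_{F_{x_i}})$) forces equality $\tau(F_i,\Delta_{F_i})=\tau(X,\Delta)$ for $i$ large. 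The base contributes nothing; there is no adjunction to $Z$, no discriminant divisor, and the "delicate technical heart" you flag in your last paragraph simply does not arise. The genuine subtlety in the reduction is elsewhere: preserving the $\epsilon$-lc condition on the fibers, which the paper handles by first making $K_X+\tau\Delta$ nef (here the bigness of $\Delta$ is used) so that the subsequent MMP steps are $(K_X+\tau\Delta)$-trivial and the negativity lemma applies.

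The second point your proposal does not address is how to pass from an ample divisor \emph{class} $H$ to an actual boundary with coefficients bounded below and uniformly controlled singularities --- without this, Birkar's Theorem \ref{birkar} (which requires coefficients $\geq\delta$ and an $\epsilon$-lc pair) cannot be applied, and your step (ii) has no starting point. The paper's device is Angehrn--Siu effective base point freeness: $2(K_X+2n^2H)$ is base point free and ample, so a general member $H'$ gives a pair $(X,\tfrac12 H')$ that is $\tfrac12$-lc with coefficient exactly $\tfrac12$, and the M\"obius relation
\[
\tau\left(X,\tfrac12 H'\right)=2n^2\,\frac{\tau(X,H)}{\tau(X,H)+1}
\]
transports a convergent sequence of thresholds for ample polarizations into a convergent sequence in $\T_n(\tfrac12,\tfrac12)\cap[\eta,1]$, contradicting Corollary \ref{finite}. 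Your mention of DCC coefficients "because the canonical bundle formula introduces fractional boundaries" points at the wrong mechanism; the fixed coefficient $\tfrac12$ and fixed log discrepancy bound coming from Angehrn--Siu are what make the boundedness input usable.
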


The proof of the above theorem relies on the new results obtained by Birkar in \cite{Bir16} on boundedness of certain Fano varieties and an appropriate modification of the techniques in \cite{DiC12}. 

It seems an extremely hard problem to effectively bound the numerator of the pseudo-effective thresholds as in Theorem \ref{cone}. Moreover, since we are dealing with $\Eff(X)$, it seems natural to relax the positivity assumption and work with just a big divisor $H$. We show that in this case the discreteness of the set fails. 

\begin{theorem}\label{dense}
Fix $n\geq 3$. Then the set of pseudo-effective thresholds $\tau(X,H)$ of a big divisor $H$ with respect to a smooth variety $X$ is dense in $\rr_{\geq 0}$.
\end{theorem}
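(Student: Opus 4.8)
The plan is to prove the theorem by exhibiting explicit families of examples; in fact I will show that already \emph{every} positive rational number occurs as such a threshold, which suffices since $\qq_{>0}$ is dense in $\rr_{\geq 0}$.

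Fix $n\geq 3$. For an integer $a\geq 1$ let $\mathbb{F}_a$ be the Hirzebruch surface, i.e. the $\pp^1$-bundle over $\pp^1$ carrying a section $C_0$ with $C_0^2=-a$, let $f$ denote a fibre of the ruling, and set $X=X_a:=\mathbb{F}_a\times\pp^{n-2}$, a smooth projective toric $n$-fold. Writing $p_1,p_2$ for the two projections and $L$ for the hyperplane class on $\pp^{n-2}$, I would first record the numerical data: $\{p_1^*C_0,\,p_1^*f,\,p_2^*L\}$ is a basis of $N^{1}(X)_{\rr}$; one has $K_X=-2\,p_1^*C_0-(a+2)\,p_1^*f-(n-1)\,p_2^*L$; and, since the pseudo-effective cone of a smooth projective toric variety is the rational polyhedral cone generated by the classes of its torus-invariant prime divisors (here the pullbacks via $p_1$ and $p_2$ of the invariant divisors of the two factors), $\Eff(X)$ equals the cone $\langle p_1^*C_0,\,p_1^*f,\,p_2^*L\rangle$, i.e. the positive octant in these coordinates. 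In particular $K_X\notin\Eff(X)$.

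Next I would compute the threshold for a suitable polarization. For positive integers $\alpha,\beta,\gamma$ put $H=H_{\alpha,\beta,\gamma}:=\alpha\,p_1^*C_0+\beta\,p_1^*f+\gamma\,p_2^*L$; this is an integral Cartier divisor whose class lies in the interior of $\Eff(X)$, hence is big. Since $K_X+tH$ has coordinates $(t\alpha-2,\ t\beta-(a+2),\ t\gamma-(n-1))$, it belongs to $\Eff(X)$ exactly when $t\geq\max\{\,2/\alpha,\ (a+2)/\beta,\ (n-1)/\gamma\,\}$, and therefore $\tau(X,H)=\max\{\,2/\alpha,\ (a+2)/\beta,\ (n-1)/\gamma\,\}$. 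Finally, given $r=m/\beta\in\qq_{>0}$ written with $m,\beta$ positive integers and $m\geq 3$ (possible after clearing denominators and rescaling by $3$), I take $a:=m-2\geq 1$ and then choose $\alpha,\gamma$ large enough that $2/\alpha\leq r$ and $(n-1)/\gamma\leq r$ (e.g. $\alpha=\lceil 2\beta/m\rceil$, $\gamma=\lceil (n-1)\beta/m\rceil$); this forces $\tau(X_a,H_{\alpha,\beta,\gamma})=(a+2)/\beta=r$. Hence every positive rational is realized, and the set of thresholds is dense.

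The main — indeed essentially the only — point requiring care is the determination of $\Eff(X)$: I need to know that it is \emph{not larger} than the octant above, for otherwise $K_X+tH$ might already be pseudo-effective for some $t$ smaller than the claimed value and the threshold could drop. This is precisely where the toric hypothesis does the work, since for a smooth projective toric variety the effective (equivalently pseudo-effective) cone coincides with the rational polyhedral cone spanned by the invariant prime divisors, and for the product $\mathbb{F}_a\times\pp^{n-2}$ those classes span exactly $\langle p_1^*C_0,p_1^*f,p_2^*L\rangle$. The remaining assertions are routine: that $H$ is a genuine big Cartier divisor, that $\{\,t\geq 0:K_X+tH\in\Eff(X)\,\}$ is the half-line $[\tau(X,H),\infty)$, and the elementary arithmetic of choosing $a,\alpha,\beta,\gamma$. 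I would also note that the construction goes through unchanged for $n=2$ (take $X=\mathbb{F}_a$ and drop the $p_2^*L$ coordinate), so the hypothesis $n\geq 3$ is not actually needed for this argument.
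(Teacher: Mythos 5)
Your argument is correct, and the key step is exactly the one you flag: for a smooth projective toric variety the pseudo-effective cone is the rational polyhedral cone spanned by the torus-invariant prime divisors, so on $\mathbb{F}_a\times\pp^{n-2}$ it really is the octant $\langle p_1^*C_0,p_1^*f,p_2^*L\rangle$, the coordinates of $K_X+tH$ determine membership in $\Eff(X)$ exactly, and your arithmetic with $a,\alpha,\beta,\gamma$ then realizes every positive rational as a threshold. This is, however, a genuinely different construction from the paper's. The paper follows Cutkosky: it takes $S=C\times C$ for an elliptic curve $C$, where $\Eff(S)$ is a \emph{circular} cone, sets $X_k=\pp(\oo_S\oplus\oo_S(-H_k))$ and $M_k=3S+\pi^*D_k$, computes $\tau(X_k,M_k)=\sigma(D_k,H_k)=3k/(3k+1)$, and obtains density by rescaling and taking products with elliptic curves for higher dimensions. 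Your toric approach is more elementary and hits every positive rational on the nose rather than as a limit; what it cannot do is produce irrational thresholds, since a toric effective cone is rational polyhedral, whereas the paper's round-cone examples yield irrational $\sigma(D,H)$ for generic $D,H$ (roots of quadratics), which feeds the question raised in the introduction about which real numbers occur, and the same examples are reused later to compute $\vol(K_{X_k}+M_k)=12/k$ and exhibit failure of effective birationality. Two small remarks: your examples necessarily have unbounded multiplicities $\alpha,\beta,\gamma$, consistent with the paper's observation (via Corollary \ref{surf}) that such degenerations require unbounded coefficients; and your claim that the construction already works for $n=2$ is correct as the theorem is stated (no coefficient restriction on $H$), which actually sharpens the paper's tentative remark that the surface case is unclear.
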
 

It would be interesting to understand what kind of real numbers appear as pseudo-effective thresholds of big divisors. This should be connected to the results and conjectures in \cite{Kuronya}. Moreover, it is not clear if Theorem \ref{dense} holds for surfaces. The existence of the Zariski decomposition and a very general minimal model program for surfaces, established by Fujino in \cite{Fujino}, imply strong restrictions on the set of pseudo-effective thresholds. See Corollary \ref{surf} for more details. 

The set $S_n$ appearing in Theorem \ref{fujita} has been extensively studied. In \cite{BCHM}, it is shown that $S_n$ is contained in the rational numbers. In \cite{DiC12}, the author proved that there are no increasing sequences in $S_n$. Already those theorems have many applications and they provide important boundedness and effective birationality results, see \cite{Gabriele} and \cite{HMX} for more details. Finally, Theorem \ref{fujita} was already known for toric varieties \cite{Paf}.

The dual statement of Theorem \ref{fujita} should allow us to prove a cone theorem for movable curves as the one in \cite{Araujo} and \cite{Lehman}. It would be interesting to understand if Theorem \ref{fujita} implies a discreteness statement for extremal rays of the cone of movable curves. Moreover, Theorem \ref{fujita} has applications to number theory as well. For example, in \cite{Batyrev} and \cite{Yuri}, Fujita's spectrum conjecture is used to compute the asymptotic of the number of rational points with bounded height on projective varieties. 

\vspace{0.5cm}

\noindent\textbf{Acknowledgements}. I would like to thank Chen Jiang, Chenyang Xu and the referee for many constructive comments.
\section{Preliminaries}

Even if the statement of Theorem \ref{fujita} involves only smooth varieties, in the proof we will need to work with a special class of singular varieties. Let us recall some definitions from the minimal model program. 

For us a pair $(X,\Delta)$ is always the data of a normal projective variety $X$ defined over $\cc$ and a $\rr$-Weil divisor $\Delta$ such that $K_X +\Delta$ is $\rr$-Cartier.  For any birational morphism $f: Y\rightarrow X$ from a normal variety $Y$, we can write 
\[
K_Y \sim_\rr f^*(K_X+\Delta) +\sum a(E,X,\Delta) E,
\]
where the sum runs over irreducible exceptional divisors of $f$ and the components of $f_* ^{-1}\Delta$. The number $a(E,X,\Delta)$ is called the discrepancy of $E$ with respect to the pair $(X,\Delta)$. We will allow some type of singularities of the pair.

\begin{definition}
Let $(X,\Delta)$ be a pair with coefficients of $\Delta$ in $[0,1]$. We say that $(X,\Delta)$ is $\epsilon$-lc if there exists a real number $\epsilon \geq 0$ such that for any log resolution of singularities $f: Y\rightarrow X$, the discrepancy $a(E,X,\Delta)\geq -1+\epsilon$ for any divisor $E$.
\end{definition}

We can easily recover the classical definitions of singularities in the minimal model program. For example, a $0$-lc pair is a lc pair and if $(X,\Delta)$ is $\epsilon$-lc for some $\epsilon >0$ then $(X,\Delta)$ is klt. 

We will use the minimal model program as established in \cite{BCHM}. More precisely, we will need the following result which is a combination of Theorem 1.2 and Corollary 1.3.3 in \cite{BCHM}. We refer to the original paper for the necessary definitions and the details of the proof.

\begin{theorem}\label{mmp}
Let $(X,\Delta)$ be a klt pair. Then
\begin{enumerate}
\item if $K_X+\Delta$ is pseudo-effective and $\Delta$ is big then $(X,\Delta)$ has a minimal model, or
\item if $K_X+\Delta$ is not pseudo-effective we can run a $K_X +\Delta$-MMP and end with a Mori fiber space.
\end{enumerate} 
\end{theorem}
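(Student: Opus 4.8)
The plan is to derive both statements from the termination theorems of \cite{BCHM} by running a minimal model program with scaling of an ample divisor. Recall the setup: if $A$ is an ample $\rr$-divisor with $(X,\Delta+A)$ klt and $K_X+\Delta+A$ nef, one runs the $(K_X+\Delta)$-MMP with scaling of $A$, at each stage replacing $A$ by its strict transform and recording the nef threshold $\lambda_i=\inf\{t\geq 0 : K_{X_i}+\Delta_i+tA_i \text{ is nef}\}$; the program terminates either with a model on which $K+\Delta$ is nef or with a Mori fiber space. Since $\Delta+A$ carries an ample summand it is big, so \cite[Theorem 1.2, Corollary 1.3.3]{BCHM} applies and the program does terminate.

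For part (1), I would take $\Delta$ big and $K_X+\Delta$ pseudo-effective, choose such an $A$, and run the $(K_X+\Delta)$-MMP with scaling of $A$. Every step is $(K_X+\Delta)$-negative, and on a Mori fiber space $K+\Delta$ is negative on the curves of the fibering family, hence not pseudo-effective; since pseudo-effectiveness of $K+\Delta$ is preserved under the divisorial contractions and flips of the program, the output cannot be a Mori fiber space. It is therefore a birational model $X'$ on which $K_{X'}+\Delta'$ is nef, and as discrepancies only increase along the program this is a minimal model of $(X,\Delta)$. (In fact part (1) is exactly \cite[Theorem 1.2]{BCHM}; the argument above is just the route through scaling.)

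For part (2), $K_X+\Delta$ is not pseudo-effective but $\Delta$ need not be big, so I would first pass to the big-boundary situation by perturbation: choose $A$ ample with $(X,\Delta+A)$ klt and $K_X+\Delta+A$ nef, so that $\Delta+A$ is big, and run the $(K_X+\Delta)$-MMP with scaling of $A$, which terminates by \cite[Corollary 1.3.3]{BCHM}. If it ended with a model on which $K+\Delta$ were nef, then, $K_X+\Delta$ being carried to that model by a sequence of birational contractions and flips, under which pseudo-effectiveness of a class is both preserved and reflected, $K_X+\Delta$ would be pseudo-effective on $X$, a contradiction. Hence the program ends with a Mori fiber space, as desired.

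The genuine difficulty is entirely inside the black box \cite{BCHM}: existence of flips for klt pairs, and termination of the MMP with scaling when the boundary is big, together with the nonvanishing input that makes the scaled program well defined. Granting these, what remains is bookkeeping, and the one point I would be careful about is that the relevant invariant along the steps of the program is pseudo-effectiveness of the class $K+\Delta$ itself, a birational invariant under these operations, so that the BCHM dichotomy ``minimal model or Mori fiber space'' lines up exactly with the dichotomy ``$K_X+\Delta$ pseudo-effective or not'' on $X$.
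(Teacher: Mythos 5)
The paper gives no proof of this statement: it simply records it as ``a combination of Theorem 1.2 and Corollary 1.3.3 in \cite{BCHM}'' and refers to that paper. Your proposal is a correct unpacking of exactly that citation---part (1) from \cite[Theorem 1.2]{BCHM} and part (2) from \cite[Corollary 1.3.3]{BCHM} via the MMP with scaling---so it matches the paper's approach.
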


It was already observed by Fujita, that Theorem \ref{fujita} is closely related to some boundedness statements. For example, he showed in \cite{Fujita} that the theorem is a consequence of the BAB conjecture. In \cite{DiC12}, we derived the ascending chain condition for $S_n$ using boundedness results on a suitable class of log Calabi-Yau pairs obtained in \cite{HMX}. In this paper, we will use the following theorem obtained recently by Birkar \cite{Bir16}.

\begin{theorem}\label{birkar}\cite[Theorem 1.4]{Bir16}
Fix a natural number $n$ and two positive real number $\epsilon$ and $\delta$. Consider pairs $(X,\Delta)$ such that
\begin{enumerate}
\item $(X,\Delta)$ is $\epsilon$-lc of dimension $n$, 
\item $\Delta$ is big with $K_{X}+\Delta \sim_\rr 0$, and 
\item the coefficients of $\Delta$ are more than or equal to $\delta$.
\end{enumerate} 
Then the set of such $X$ is a bounded family.
\end{theorem}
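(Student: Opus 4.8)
Since this is \cite[Theorem 1.4]{Bir16}, I only describe the architecture of Birkar's proof. The plan is to reduce the boundedness of $\{X\}$ to two effective statements about $\epsilon$-lc weak Fano varieties of dimension $n$ and then invoke the usual Hilbert scheme argument. As $K_X+\Delta\sim_\rr 0$ with $\Delta\ge 0$ big, the divisor $-K_X$ is big, so $X$ is of Fano type. The first step is to rigidify the data by means of the theory of complements: because $X$ is Fano type, is $\epsilon$-lc, and the coefficients of $\Delta$ are bounded below by $\delta$, there is an integer $N=N(n,\epsilon,\delta)$ and an $N$-complement $K_X+\Delta^+\sim_\rr 0$ with $\Delta^+\ge\Delta$. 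This bounds the Cartier index of $-K_X$ on a fixed crepant model, shows that $\Delta$ may be chosen with bounded denominators, and bounds the number of components of $\Delta$, since each of them contributes at least $\delta$ to a divisor numerically equivalent to $-K_X$.

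Running a $(-K_X)$-MMP on a small $\qq$-factorialization then produces an $\epsilon$-lc variety $Y$, birational to $X$, with $-K_Y$ nef and big; the anticanonical volume does not decrease along such a run, so $\vol(-K_X)\le\vol(-K_Y)$. For $Y$ I would establish the two effective statements: an upper bound $\vol(-K_Y)\le M(n,\epsilon)$, and an integer $m=m(n,\epsilon)$ for which $|{-mK_Y}|$ defines a birational map. Granting these, $(Y,-mK_Y)$ is a polarized pair of bounded volume, bounded Cartier index, and bounded singularities, hence sits in finitely many components of a Hilbert scheme; boundedness of $X$ then follows from that of $Y$ using the control over the steps of the $(-K)$-MMP furnished by the complements.

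The hard part — the core of \cite{Bir16}, and the reason Theorem \ref{fujita} is not accessible from \cite{BCHM} and \cite{HMX} alone — is establishing the volume bound and the effective birationality; these I would prove simultaneously by induction on $n$. Choosing divisors in $|{-kK_Y}|$ with large multiplicity at a very general point and cutting down by general hyperplanes, one produces non-klt centers of bounded multiples of $-K_Y$; restricting to such a center $V$, which is again of Fano type, of smaller dimension, and with singularities controlled via adjunction of the $N$-complement, one applies the inductive hypothesis to $V$ and lifts pluri-anticanonical sections from $V$ back to $Y$ by Kawamata--Viehweg vanishing, thereby bounding in turn the birationality index and the volume. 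The $\epsilon$-lc hypothesis is used precisely here: it keeps the log canonical thresholds of the auxiliary linear systems in a uniform range and prevents the non-klt centers from degenerating. It cannot be dropped, as the unbounded family of Fano surfaces $\pp(1,1,a)$ shows (these are $\epsilon$-lc only for $\epsilon$ of order $1/a$), and neither can the lower bound $\delta$ on the coefficients of $\Delta$.
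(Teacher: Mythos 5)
The paper does not prove this statement: it is quoted verbatim as Theorem 1.4 of \cite{Bir16}, so there is no internal proof to compare your sketch against. Your outline is a faithful description of Birkar's actual argument --- bounded complements to rigidify the data, effective birationality of $|-mK|$ together with an upper bound on $\vol(-K)$ for $\epsilon$-lc weak Fanos proved by induction on dimension via non-klt centers, adjunction and vanishing, and then the standard Hilbert-scheme argument --- and your remarks on where the $\epsilon$-lc hypothesis and the lower bound $\delta$ on the coefficients enter (and why neither can be dropped, e.g.\ the family $\pp(1,1,a)$) are accurate. Be aware, though, that as written this is an architectural summary rather than a proof: the two effective statements you ``grant'' are precisely the hard content of \cite{Bir16}, and nothing in the present paper, nor in \cite{BCHM} or \cite{HMX}, supplies them.
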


Recall that a set $\D$ of varieties is bounded is there exists a projective morphism $Z \rightarrow T$, where $T$ is of finite type, such that for any $X\in \D$ there exist a closed point in $t\in T$ and an isomorphism $Z_t \rightarrow X$.

\section{Fujita's spectrum conjecture}

In this section we prove Theorem \ref{fujita}. As we mentioned above we will need to work with a suitable set of singular pairs.  

\begin{definition}
Fix a positive integer $n$, a positive real number $\epsilon$ and a finite set $I\subset [0,1]$. We define $\D_{n}(\epsilon, I)$ to be the set of lc pairs $(X,\Delta)$ such that: 
\begin{enumerate}
\item $X$ is a normal projective variety of dimension $n$,
\item $\Delta$ is a big divisor with coefficients in $I$, and
\item $(X,t\Delta)$ is $\epsilon$-lc and $K_X +t \Delta$ is pseudo-effective for some $0\leq t \leq 1$. 
\end{enumerate}
\end{definition}

It was already observed in \cite{DiC12} that condition $(3)$ is necessary for our purposes. Moreover, Theorem \ref{dense} shows that we cannot avoid a condition involving the singularities of the pair. On the other hand, here we assume $\Delta$ to be a big divisor just to avoid a technical problem in the proof of Theorem \ref{structure}, see the remark after the proof, but it shouldn't be necessary.
  
We will study only pseudo-effective thresholds coming from pairs in $\D_{n}(\epsilon,I)$.

\begin{definition}
Fix a positive integer $n$, a positive real number $\epsilon$ and a finite set $I\subset [0,1]$. We define the sets 
\bdism
\T_{n}(\epsilon,I):=\left\{\tau(X,\Delta) \:|\: (X,\Delta )\in\D_{n}(\epsilon,I)\right\},
\edism
and 
\bdism
\T^{o}_{n}(\epsilon,I):=\left\{\tau(X,\Delta) \:|\: (X,\Delta) \in\D_{n}(\epsilon,I) \: \text{and} \: \rho(X)=1 \right\}.
\edism
\end{definition} 

A similar argument as in \cite{DiC12} provides a structure theorem for the set of pseudo-effective thresholds, see Theorem 1.6 there. On the other hand, the $\epsilon$-lc condition is quite tricky and we need to be more careful in the proof.

\begin{theorem}\label{structure}
Fix a positive integer $n$, a positive real number $\epsilon$ and $I\subset [0,1]$. Then
\[
\T_{n}(\epsilon,I)= \bigcup_{j=0}^{n} \T^{o}_{j}(\epsilon,I).
\] 
\end{theorem}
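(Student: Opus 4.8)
The plan is to prove the two inclusions of the claimed set equality separately. The inclusion $\bigcup_{j=0}^{n}\T^{o}_{j}(\epsilon,\delta)\subseteq\T_{n}(\epsilon,\delta)$ is the routine one: given $(Y,\Delta_{Y})\in\D_{j}(\epsilon,\delta)$ with $\rho(Y)=1$ and $j\le n$, I would realise $\tau(Y,\Delta_{Y})$ inside $\T_{n}(\epsilon,\delta)$ by iterated products with an elliptic curve. Fixing an elliptic curve $E$ with origin $0$ and setting $X=Y\times E$, $\Delta_{X}=p_{1}^{*}\Delta_{Y}+\delta\,(Y\times\{0\})$, one checks that $\Delta_{X}$ is big (note $\rho(Y)=1$ forces $\Delta_{Y}$ ample) with coefficients $\ge\delta$, and — since $K_{E}\sim 0$ — that $K_{X}+t\Delta_{X}\sim_{\rr}p_{1}^{*}(K_{Y}+t\Delta_{Y})+t\delta\,p_{2}^{*}\{0\}$ is pseudo-effective exactly when $K_{Y}+t\Delta_{Y}$ is (restrict to a general fibre $Y\times\{e\}$ for one direction, add pseudo-effective classes for the other), so $\tau(X,\Delta_{X})=\tau(Y,\Delta_{Y})$. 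Since a product of $\epsilon$-lc pairs is $\epsilon$-lc and the $\epsilon$-lc witness of any member of $\D_{j}(\epsilon,\delta)$ is automatically $\le(1-\epsilon)/\delta$ (read off the discrepancy of a boundary component), the coefficient $t=\tau(Y,\Delta_{Y})$ witnesses $(X,\Delta_{X})\in\D_{j+1}(\epsilon,\delta)$; iterating $n-j$ times finishes this inclusion.

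For the reverse inclusion I fix $(X,\Delta)\in\D_{n}(\epsilon,\delta)$ and set $\tau=\tau(X,\Delta)$. If $\rho(X)=1$ there is nothing to prove, and if $\tau=0$ then $K_{X}$ is pseudo-effective and $\tau=\tau(E,\delta\{0\})\in\T^{o}_{1}(\epsilon,\delta)$, so I may assume $\tau>0$ and $\rho(X)>1$. Because $\Delta\ge 0$, the discrepancies of $(X,t\Delta)$ are non-increasing in $t$, so from the given witness $t_{0}\in[\tau,1]$ the pair $(X,t\Delta)$ is $\epsilon$-lc for every $t\in[0,t_{0}]$; in particular $X$ is klt and, since $K_{X}+\tau\Delta$ is pseudo-effective (closedness of the pseudo-effective cone) and $\tau\Delta$ is big, the coefficient $\tau$ is itself an admissible witness. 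By Theorem~\ref{mmp}(1) the klt pair $(X,\tau\Delta)$ has a minimal model $X\dashrightarrow X_{1}$; being a $(K_{X}+\tau\Delta)$-MMP it does not lower the discrepancies of $(X,\tau\Delta)$, so $(X_{1},\tau\Delta_{1})$ is $\epsilon$-lc, and $K_{X_{1}}+\tau\Delta_{1}$ is nef but \emph{not} big, since on a common resolution $p^{*}(K_{X}+\tau\Delta)=q^{*}(K_{X_{1}}+\tau\Delta_{1})+E$ with $E\ge 0$ while $K_{X}+\tau\Delta$ lies on the boundary of the pseudo-effective cone. The same boundary property forces $\tau(X_{1},\Delta_{1})=\tau$, because otherwise $K_{X_{1}}+\tau\Delta_{1}$ would be a sum of a pseudo-effective class and a big one, hence big.

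Now $K_{X_{1}}+\tau\Delta_{1}$ is semiample by the base-point-free theorem; I let $h\colon X_{1}\to W$ be its ample model and replace $(X_{1},\Delta_{1})$ by a general fibre $(F,\Delta_{F})$ of $h$ (keeping $X_{1}$ itself if $\dim W=0$). Then $K_{F}+\tau\Delta_{F}\equiv 0$, the pair $(F,\tau\Delta_{F})$ is $\epsilon$-lc by adjunction along a general fibre, $\Delta_{F}$ is big with coefficients $\ge\delta$ (it is the restriction of $\Delta_{1}$, whose horizontal part is nonzero as $\Delta_{1}$ is big), $\tau(F,\Delta_{F})=\tau$ (because $K_{F}+t\Delta_{F}\equiv(t-\tau)\Delta_{F}$ is pseudo-effective precisely for $t\ge\tau$), and $\dim F\le n$. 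If $\rho(F)=1$ then $(F,\Delta_{F})\in\D_{\dim F}(\epsilon,\delta)$ has Picard number one and $\tau\in\T^{o}_{\dim F}(\epsilon,\delta)$. If $\rho(F)>1$, then $K_{F}\equiv-\tau\Delta_{F}$ is not pseudo-effective, so by Theorem~\ref{mmp}(2) I run a $K_{F}$-MMP ending with a Mori fibre space $F'\to V$; each contracted ray is $K_{F}$-negative, hence by $K_{F}+\tau\Delta_{F}\equiv 0$ is $(K_{F}+\tau\Delta_{F})$-trivial, so each step is crepant for $(F,\tau\Delta_{F})$ and the $\epsilon$-lc property, the relation $K+\tau\Delta\equiv 0$, and the threshold $\tau$ all persist. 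A general fibre $G$ of $F'\to V$ then has $\rho(G)=1$, $\dim G\le n$, carries $\Delta_{G}:=\Delta_{F'}|_{G}$ big with coefficients $\ge\delta$, with $(G,\tau\Delta_{G})$ $\epsilon$-lc and $\tau(G,\Delta_{G})=\tau$; thus $(G,\Delta_{G})\in\D_{\dim G}(\epsilon,\delta)$ and $\tau\in\T^{o}_{\dim G}(\epsilon,\delta)$.

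The step I expect to be the genuine obstacle is exactly the $\epsilon$-lc bookkeeping sketched above: the canonical threshold has to be transported through two birational modifications and two fibre restrictions without ever losing control of the discrepancies. The mechanism that makes it work is to run every minimal model program as a log MMP for the pair $(\,\cdot\,,\tau\Delta)$ — legitimate precisely because $\tau$ is an admissible witness — so that the first MMP is $(K+\tau\Delta)$-negative and the $K_{F}$-MMP is $(K+\tau\Delta)$-trivial on each contracted ray, in neither case decreasing the discrepancies of $(\,\cdot\,,\tau\Delta)$; then adjunction along a general fibre preserves $\epsilon$-lc. Bigness of $\Delta$ is used in two places: to guarantee that the strict transforms and fibre restrictions of $\Delta$ remain nonzero with coefficients $\ge\delta$, and to pin down the threshold at the ``nef but not big'' and ``$K\equiv-\tau\Delta$ not pseudo-effective'' steps. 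This is the technical reason for imposing the bigness hypothesis on $\Delta$.
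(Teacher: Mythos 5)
Your argument is correct (at the same level of rigor as the paper on shared technicalities such as $\qq$-factoriality and the Picard number of a general versus generic fiber of a Mori fiber space), but it reaches Picard number one by a genuinely different route. The paper approximates from below: for rational $x_i<\tau$ it runs $(K_X+x_i\Delta)$-MMPs with scaling of $\Delta$ to Mori fiber spaces, observes that for $x_i$ close to $\tau$ the steps are $(K_X+\tau\Delta)$-trivial (so $\epsilon$-lc survives by the negativity lemma), and obtains fibers whose thresholds only satisfy $x_i<\tau(F_i,\Delta_i)\le\tau$; it then needs the ACC for pseudo-effective thresholds from \cite{DiC12} to conclude that this increasing sequence stabilizes at $\tau$. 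You instead work directly at $t=\tau$: minimal model, then the ample model of $K_{X_1}+\tau\Delta_1$ (using that nef plus klt with big boundary gives semiample, which is part of BCHM but goes beyond the literal statement of Theorem~\ref{mmp} as quoted), restriction to a general fiber where $K_F+\tau\Delta_F\sim_\rr 0$, and finally a $K_F$-MMP that is automatically $(K_F+\tau\Delta_F)$-trivial. This pins the threshold down exactly at every stage via $K+t\Delta\equiv(t-\tau)\Delta$ with $\Delta$ big, so you avoid the limiting argument and the external ACC input entirely; the price is the reliance on goodness of minimal models. Your treatment of the easy inclusion is also more careful than the paper's one-line remark: $p_1^*\Delta_Y$ alone is not big on $Y\times E$, and your insertion of the component $\delta\,(Y\times\{0\})$, together with the observation that the witness $t$ satisfies $t\delta\le 1-\epsilon$, is exactly what is needed to stay inside $\D_{j+1}(\epsilon,\delta)$.
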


\begin{proof}
First, we will show that for any pair $(X,\Delta)\in\D_{n}(\epsilon,I)$ there exists another pair $(F,\Delta_F)\in\D_{j}(\epsilon,I)$ with $j\leq n$ and $\rho(F)=1$ such that $\tau=\tau(X,\Delta)=\tau(F,\Delta_F)$. By Corollary 1.37 in \cite{Kol13}, $(X,\Delta)$ admits a small $\qq$-factorial model in $\D_{n}(\epsilon,I)$ with same pseudo-effective threshold, since the map is small. In particular, we can assume that $X$ is a $\qq$-factorial variety. 

By definition $\D_{n}(\epsilon,I)$, we know that $(X,\tau \Delta)$ is $\epsilon$-lc, $K_X+\tau \Delta$ is pseudo-effective and $\Delta$ is big. In particular, by Theorem \ref{mmp}, we can run a $K_X+\tau \Delta$-MMP and end up with a minimal model. So we assume that $K_X +\tau \Delta$ is nef. 

For each $x<\tau$ we can run a $K_X +x \Delta$-MMP, again using Theorem \ref{mmp}, and obtain a birational map $f: X \dashrightarrow Y_x$, where $Y_x$ is Mori fiber space $Y_x \rightarrow Z_x$. Let $F_x$ be the generic fiber and $\Delta _{F_x}=( f_* \Delta )|_{F_x}$. By construction, we have that $x< \tau (F_x, \Delta_{F_x}) \leq \tau$. Each step of the minimal model program is $K_X +x \Delta$-negative and, if $x$ is sufficiently close to $\tau$, it is $K_X +\tau \Delta$-trivial. In other words, we are running a $K_X+x \Delta$-MMP with scaling of $\Delta$ since $K_X+\tau \Delta$ is nef. See \cite{Araujo} for a nice introduction on the MMP with scaling. The usual argument with the negativity lemma, as in \cite{KM}, implies that $a(E,Y_x,\tau f_*\Delta)\geq a(E,X, \tau \Delta)$, where $E$ is $f$-exceptional. In particular, we can assume that $(F, \tau \Delta_F)$ is $\epsilon$-lc as well. 

Let $\{x_i\}$ be an increasing sequence of rational numbers $x_i <\tau$ converging to $\tau$. For simplicity denote $F_i=F_{x_{i}}$ and similarly $\Delta_i=\Delta_{F_i}$. 
By the above argument, we can pass to a subsequence and assume that all the pairs $(F_i, \tau \Delta_i)$ are $\epsilon$-lc. Moreover, the coefficients of $x_i \Delta_{i}$ are in a DCC set since the coefficients of $\Delta_{i}$ are in a fixed finite set, recall that $\Delta$ is fixed. In particular, the sequence $\{\tau (F_i, \Delta_{F_i})\}$ in an increasing sequence converging to $\tau$ and by the ACC for pseudo-effective thresholds proved in \cite{DiC12}, we know that for $i$ large enough $\tau (F_i, \Delta_{F_i})=\tau(X,\Delta)$.

The other inclusion can be proved by taking $X\times E$, where $E$ is an elliptic curve. It is easy to show that $\tau(X,\Delta)=\tau(X\times E, \Delta\times E)$.
\end{proof}
 
\begin{remark}
The $\epsilon$-lc condition is preserved as long as the map $f$, constructed above, is $K_X+\tau \Delta$-trivial. In \cite{DiC12}, we avoided the problem using the ACC of log canonical thresholds. In this case the same proof will provide us that the fibers are lc, which is not enough for the purposes of this paper. Here, we added the condition that $\Delta$ is big, so that we can run a $K_X +\tau \Delta$-MMP and then deduce the necessary statement. Without that assumption, it is not clear that we can control uniformly the singularities of the fibers of the Mori fiber space. Note that a similar issue was pointed out also by Gongyo in \cite{Gongyo}.
\end{remark}

The above theorem implies that problems involving the pseudo-effective threshold of a pair can be reduced to varieties with Picard number $1$.

\begin{corollary}\label{zero}
Fix a positive integer $n$, two positive real numbers $\epsilon$ and $\eta$ and a finite set $I\subset [0,1]\cap \qq$. Then the set $\T^{o}_{n}(\epsilon,I) \cap [\eta, 1]$ is finite. 
\end{corollary}

\begin{proof}
Let $\tau=\tau(X,\Delta)\in \T^{o}_{n}(\epsilon,I) \cap [\eta, 1]$. As before, we can assume that $X$ is $\qq$-factorial. By assumption, $(X, \tau \Delta)$ is still $\epsilon$-lc, $K_X + \tau \Delta \equiv 0$ and the coefficients of $\tau \Delta$ are uniformly bounded from below, because $\tau\geq \eta>0$. By Theorem \ref{birkar}, the set of all such $X$'s is a bounded family. In particular, there is a natural number $i$, independent of $X$, such that $i K_{X}$ is Cartier. Moreover, we can choose a very ample divisor $A$ such that $-K_X \cdot A^{n-1} \leq M$ for some uniform $M$. In fact, we can choose $A$ to be a fixed multiple of $-iK_X$, again by boundedness. Since $K_{X}+\tau \Delta \equiv 0$, we have that $i (\tau \Delta) \cdot A^{n-1}$ is an integer uniformly bounded from above. Since, $\tau$ is bounded away from $0$, $\Delta \cdot A^{n-1}$ is uniformly bounded from above as well. Let $d$ be an integer such that $dx$ is an integer for any $x\in I$. In particular, $d\Delta$ is an integral divisor for any $\Delta$ appearing in $\T^{o}_{n}(\epsilon,I)$. We can choose $A^{n-1}$ to be represented by a cycle in the smooth locus of $X$ so that $d\Delta \cdot A^{n-1}$ is an integer. In particular, there exists a finite set $I'$, independent of $X$, such that $-K_X \cdot A ^{n-1} \in I'$ and $\Delta \cdot A^{n-1} \in I'$. This implies that 
\[
\tau=\frac{-K_X\cdot A^{n-1}}{\Delta \cdot A^{n-1}}
\]
must belong to a fixed finite set as well. 
\end{proof}

Combining Theorem \ref{structure} and the above result we obtain the following. 

\begin{corollary}\label{finite}
Fix a positive integer $n$, two positive real numbers $\epsilon$ and $\eta$ and a finite set $I\subset [0,1]\cap \qq$. Then 
$\T_{n}(\epsilon,I) \cap [\eta, 1]$ is a finite set.
\end{corollary}

We can now prove our main result. 

\begin{proof}[Proof of Theorem \ref{fujita}]
Suppose, by contradiction, that there exists a sequence of smooth polarized varieties $(X_i, H_i)$ such that the sequence $\{\tau(X_i,H_i)\}$ converges to a real number $\epsilon >0$. Since $H_i$ is ample, by Angehrn-Siu \cite{Ang} and the Cone Theorem, we know that $2(K_{X_i}+2n^{2} H_i)$ is base point free and ample. Let $H'_i \in |2(K_{X_i}+2n^{2} H_i)|$ be a general element such that $\left(X_i,\frac{1}{2}H'_i\right)$ is $1/2$-lc. Moreover, notice that 
\[
K_{X_i}+\frac{1}{2}H'_i \sim_\qq 2(K_{X_i}+n^{2}H_i)
\] 
is pseudo-effective, again by the Cone Theorem. Combining everything, we get that $\left(X_i,\frac{1}{2}H'_i\right)\in \D_n\left(\frac{1}{2},\{\frac{1}{2}\}\right)$. 
A simple computation shows that
\[
\tau\left(X_i,\frac{1}{2}H'_i \right)=2n^2 \frac{\tau(X_i, H_i)}{\tau(X_i, H_i)+1}.
\]

Thus, if the sequence $\{\tau(X_i, H_i)\}$ converges to a real number $\epsilon >0$, then the sequence $\{\tau\left(X_i, \frac{1}{2}H'_i\right)\}$ converges to a real number $\epsilon '>0$, which contradicts Corollary \ref{finite}.
\end{proof}

It would be quite useful to allow the divisor $H$ to be big and nef in Theorem \ref{fujita}. Unfortunately, the trick used in the proof above does not work in this more general setting unless we introduce some extra conditions. For many applications, for example see \cite{Yuri}, it is already interesting to work with Fano varieties, or more generally with varieties with $-K_X$ nef. We define $\mathcal{F}_n$ to be the set of pairs $(X,H)$ where
\begin{enumerate}
\item $X$ is a smooth projective variety of dimension $n$;
\item there exists an effective divisor $\Delta$ on $X$ such that $(X,\Delta)$ is klt and $-(K_X+\Delta)$ is nef, and
\item $H$ is a big and nef divisor on $X$.
\end{enumerate}

Note that we do not require any extra conditions on the coefficients of $\Delta$. Let 
\bdism
\mathcal{FT}_{n}:=\left\{\tau(X,H) \:|\: (X,H )\in\mathcal{F}_{n}\right\},
\edism
\begin{corollary}
Fix a positive integer $n$. Then $\mathcal{FT}_{n}\cap [\epsilon, \infty)$ is a finite set for any $\epsilon >0$.  
\end{corollary}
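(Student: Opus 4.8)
The plan is to follow the strategy of the proof of Theorem \ref{fujita}: from a pair $(X,H)\in\mathcal{F}_n$ with $\tau=\tau(X,H)\ge\epsilon$ I would manufacture a pair $(X,\Theta)\in\D_{n}(\epsilon_{0},\delta_{0})$, for constants $\epsilon_{0},\delta_{0}>0$ depending only on $n$, whose pseudo-effective threshold is a fixed order-preserving function of $\tau$ that is bounded away from $0$ and is $\le 1$; then Corollary \ref{finite} forces $\tau$ into a finite set. For the threshold of the new pair to be a function of $\tau(X,H)$ alone, the numerical class of $\Theta$ has to be a fixed combination of $[K_X]$ and $[H]$; the natural choice is $\Theta\equiv\beta H$ for a uniform upper bound $\beta$ of the thresholds under consideration, which gives $\tau(X,\Theta)=\tau(X,H)/\beta$, and since $X$ is smooth the pair $(X,\Theta)$ is automatically uniformly $\epsilon_0$-lc with coefficients bounded below provided $\Theta$ is, after a fixed rescaling, a suitably general member of a base point free linear system in the class $\beta H$.

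The hypothesis that $-(K_X+\Delta)$ is nef for some klt boundary $\Delta$ is what replaces the Angehrn--Siu input used in the proof of Theorem \ref{fujita}. Since $(X,\Delta)$ is klt and $H-(K_X+\Delta)$ is big and nef (being a sum of the big and nef divisor $H$ and the nef divisor $-(K_X+\Delta)$), the base point free theorem shows $H$ is semiample, so $mH$ is base point free for some positive integer $m$; a general member of $|mH|$ is then the raw material for $\Theta$, and a computation identical to the one in the proof of Theorem \ref{fujita} gives the desired formula for $\tau(X,\Theta)$.

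The genuinely delicate point, and the one I expect to be the main obstacle, is uniformity: producing $m$, and controlling the singularities of the general member of $|mH|$, in terms of $n$ alone. Angehrn--Siu fails outright for a merely big and nef $H$: on the Hirzebruch surface $X=\mathbb{F}_1$ with $H=\pi^{*}\oo_{\pp^{2}}(1)$ the $(-1)$-curve is a fixed component of $|K_X+mH|$ for every $m$, so $K_X+mH$ is never base point free and there is no general member to cut out $\Theta$; one is therefore forced to work with $\beta H$ itself, whose base point freeness is only qualitative. Moreover one cannot enlarge the boundary with $\Delta$ to gain positivity, because the coefficients of $\Delta$ are not controlled in the definition of $\mathcal{F}_n$, so $(X,\Delta+\cdots)$ need not be $\epsilon_0$-lc for any fixed $\epsilon_0$. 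I would attack the uniformity either by invoking an effective base point freeness statement for the big and nef divisor $\beta H$ on the Fano-type variety $X$, or by passing to the birational ample model $\pi\colon X\to Y$ of $H$, on which $pH=\pi^{*}A$ with $A$ ample and $Y$ of Fano type, and running the argument there while bookkeeping the discrepancies of $K_X-\pi^{*}K_Y$. Finally, the most naive idea --- perturb $H$ to a nearby ample $\qq$-divisor $H+\delta L$, note that $\tau(X,H+\delta L)\to\tau(X,H)$ by elementary continuity of the threshold in the big cone, and quote Theorem \ref{fujita} --- does not work: $H+\delta L$ is not an integral divisor, and clearing denominators divides the threshold by an unbounded integer, so the perturbed values do not lie in $S_n$.
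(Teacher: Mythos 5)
Your overall strategy coincides with the paper's: replace $H$ by half of a general member of a base point free multiple $|mH|$, check that the resulting pair lies in $\D_n\left(\frac{1}{2},\frac{1}{2}\right)$ with threshold an explicit rescaling of $\tau(X,H)$, and invoke Corollary \ref{finite}. The problem is that the step you yourself flag as ``the main obstacle'' --- producing $m$ depending on $n$ alone --- is exactly the content of the corollary beyond Theorem \ref{fujita}, and your write-up leaves it unresolved, offering two candidate attacks without carrying either out. The gap is closed by a citation rather than by new work: Koll\'ar's effective base point freeness theorem \cite{Kol2} states that if $(X,\Delta)$ is klt and $N$ is nef with $N-(K_X+\Delta)$ big and nef, then $|mN|$ is base point free for an explicit $m$ depending only on $\dim X$. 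This is precisely the first of your two proposed attacks, applied with $N=H$, and it is what the paper does; you should commit to it and quote the statement. Note also that your $\mathbb{F}_1$ example is not an obstruction to this route: what fails there is base point freeness of $K_X+mH$ (the Angehrn--Siu output), which is never needed here; only $mH$ itself must be free, and $m\,\pi^{*}\oo_{\pp^{2}}(1)$ is of course free for all $m\geq 1$.

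There is a second, smaller gap. To place $\left(X,\frac{1}{2}H'\right)$ in $\D_n\left(\frac{1}{2},\frac{1}{2}\right)$ you must verify condition (3) of the definition, namely that $K_X+\frac{t}{2}H'$ is pseudo-effective for some $t\leq 1$; equivalently you need a uniform upper bound $\tau(X,H)\leq m/2$, so that the rescaled threshold $\frac{2}{m}\tau(X,H)$ lands in $(0,1]$ and Corollary \ref{finite} applies. You implicitly assume such a uniform bound when you introduce $\beta$ as ``a uniform upper bound of the thresholds under consideration,'' but you never establish it, and it does not follow from $H$ being merely big and nef without using the Fano-type hypothesis again. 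The paper obtains it from \cite{Kol}, arranging (after enlarging $m$ by a bounded factor) that $K_X+2mH$ is pseudo-effective. With these two citations supplied, your argument becomes the paper's proof.
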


\begin{proof}
Since $H-(K_X+\Delta)$ is big and nef, by \cite{Kol2} there exists an integer $m$, depending only on $n$, such that $mH$ is base point free. Moreover, by \cite{Kol}, we can assume that $K_X+ 2mH$ is pseudo-effective. Let $H'$ be a general element of $|mH|$. Then the set of pairs $(X,\frac{1}{2}H')$ satifies Corollary \ref{finite}.  
\end{proof}

\section{Density of pseudo-effective thresholds}

In this section we will show that Theorem \ref{fujita} does not hold if we assume $H$ to be just a big divisor. The same argument will imply Theorem \ref{dense}. The construction we present here is based on Cutkosky's example of an effective divisor without Zariski decomposition. See \cite{Laz1} for more details. 

Let $C$ be an elliptic curve and $S:=C\times C$. It is well know that the cone of effective divisors of $S$ is a circular cone. Moreover, $\Eff(X)=\Nef(X)$. Let $C_{1}:=\left\{p\right\}\times C$, $C_{2}:=C\times \left\{p\right\}$ and let $\Gamma\subseteq S$ be the diagonal. Any $\rr$-divisor $D= x C_{1}+ y C_{2} + z \Gamma$ is nef if and only if 
\[
\begin{cases}
xy+xz+yz \geq 0, \\
x+y+z \geq 0.
\end{cases}
\]
Let $D$ and $H$ be two ample divisors. We define 
\bdism
\sigma(D,H):=\inf\left\{ t \in \rr_{\geq 0} \:|\: tD-H \:\text{is nef}\right\}.
\edism

The above equations make particularly easy the computation of $\sigma(D,H)$ once we know the coordinates of $D$ and $H$ in the cone generated by $C_{1}$, $C_{2}$ and $\Gamma$.

Cutkosky's idea is to construct threefolds with negative canonical divisor out of this situation. Let $X:=\pp(\oo_{S}\oplus\oo_{S}(-H))$. We can view $S\subseteq X$ as the zero section of the natural projection $\pi:X\rightarrow S$. By construction we have that $K_{X}=-2S + \pi^{*}(-H)$. Moreover, the cone $\Eff(X)$ is generated by $S$ and by $\pi^{*}\Eff(S)$.

We can now start the construction of the examples. The first thing to do is to find a sequence of divisors $(D_{k},H_{k})$ such that $\sigma(D_{k},H_{k})$ is an increasing sequence converging to a fixed number. Define 
\begin{align}\notag
D_{k}&:=k C_{1} +(k+1)C_{2}+(k+1)\Gamma, \\ \notag
H_{k}&:=k(C_{1}+C_{2}+\Gamma).
\end{align}

Note that these divisors are ample for any $k\geq 1$. It is not difficult to compute $\sigma(D_{k},H_{k})$.

\begin{lemma}
Let $D_{k}$ and $H_{k}$ be as above. Then
\bdism
\sigma(D_{k},H_{k})=\frac{3k}{3k+1}.
\edism
\end{lemma}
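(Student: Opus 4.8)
The plan is to reduce the statement to an explicit computation in the effective cone of $S = C \times C$, using the defining inequalities for nefness given just before the lemma. Recall that $\sigma(D_k, H_k) = \inf\{t \geq 0 \mid tD_k - H_k \text{ is nef}\}$, and that in the basis $C_1, C_2, \Gamma$ the divisor $tD_k - H_k$ has coordinates
\[
(x,y,z) = \bigl(tk - k,\; t(k+1) - k,\; t(k+1) - k\bigr).
\]
So first I would substitute these coordinates into the two nefness conditions $xy + xz + yz \geq 0$ and $x + y + z \geq 0$ and find, for each, the threshold value of $t$ at which it first becomes satisfied (for $t$ increasing); then $\sigma(D_k, H_k)$ is the larger of the two.

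For the linear condition $x + y + z \geq 0$ one gets $t(3k+2) - 3k \geq 0$, i.e. $t \geq \tfrac{3k}{3k+2}$. For the quadratic condition, note $y = z$, so $xy + xz + yz = 2xy + y^2 = y(2x + y)$; substituting gives a quadratic in $t$ whose relevant root should come out to $t = \tfrac{3k}{3k+1}$, which is larger than $\tfrac{3k}{3k+2}$. Hence the quadratic condition is the binding one and $\sigma(D_k, H_k) = \tfrac{3k}{3k+1}$. I would also check that for this value of $t$ both coordinates and the quadratic form are actually nonnegative (not just that the quadratic vanishes), so that $tD_k - H_k$ genuinely lies in $\Nef(S) = \Eff(S)$, and that for smaller $t$ it does not — this is what pins down the infimum exactly rather than merely bounding it.

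The main obstacle, such as it is, is purely bookkeeping: making sure the quadratic factorization is handled correctly and that one identifies which of the two constraints is active. Since $y = z$ throughout, the quadratic collapses to a product of two linear forms in $t$, so there is no genuine difficulty — the computation is elementary. One should just be careful that at $t = \tfrac{3k}{3k+1}$ the factor $y = t(k+1) - k$ is positive (it equals $\tfrac{k}{3k+1} > 0$) so that the vanishing of $y(2x+y)$ forces $2x + y = 0$, and that at this point $x + y + z > 0$, confirming consistency with the linear inequality. This completes the identification $\sigma(D_k, H_k) = \tfrac{3k}{3k+1}$.
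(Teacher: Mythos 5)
Your proposal is correct and follows essentially the same route as the paper: substitute the coordinates of $tD_k-H_k$ into the two nefness inequalities, factor the quadratic as $y(2x+y)=(t(k+1)-k)(t(3k+1)-3k)$, discard the smaller root, and identify $\sigma(D_k,H_k)$ with the larger root $\tfrac{3k}{3k+1}$ after checking the linear inequality there. One tiny arithmetic slip: at $t=\tfrac{3k}{3k+1}$ the factor $y=t(k+1)-k$ equals $\tfrac{2k}{3k+1}$, not $\tfrac{k}{3k+1}$, but it is still positive so your argument is unaffected.
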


\begin{proof}
By definition of $\sigma(D_{k},H_{k})$, we need to find when the divisor $tD_{k}-H_{k}$ hits the boundary of the nef cone. Since 
\bdism
tD_{k}-H_{k}=k(t-1) C_{1}+(t(k+1)-k)C_{2}+(t(k+1)-k)\Gamma,
\edism 
the first equation of the nef cone implies that $tD_{k}-H_{k}$ is one of the root of the polynomial $(k(t-1)+t)(3k(t-1)+t)=0$. The smaller root $t_{0}=\frac{k}{k+1}$ does not give a nef divisor since the sum of the coordinates of $t_{0}D_{k}-H_{k}$ is negative. In particular, $\sigma(D_{k},H_{k})$ is given by the largest root of the polynomial, which in this case is $\frac{3k}{3k+1}$.
\end{proof}

We choose $D_{k}$ and $H_{k}$ as above so that $\sigma(D_{k},H_{k})$ has this simple form. Actually most of the choices of $D_{k}$ and $H_{k}$ would lead to an infinite sequence of pseudo-effective thresholds as long as $D_{k}\geq H_{k}$ and they ``converge" asymptotically. Moreover, a general choice will give us an irrational number $\sigma(D,H)$, since it is a root of a degree $2$ polynomial.

Let $X_{k}:=\pp(\oo_{S}\oplus\oo_{S}(-H_{k}))$ and let $M_{k}:=3S+\pi^{*}D_{k}$. Note that $M_{k}$ is big but not nef. By the fact that $\Eff(X)$ is generated by $S$ and by $\pi^{*}\Eff(S)$, we have that for $k\geq 3$ 
\[
\tau(X_{k},M_{k})=\sigma(D_{k},H_{k})=\frac{3k}{3k+1}.
\]

Thus, we can exhibit a sequence of pseudo-effective thresholds converging to $1$. Now Theorem \ref{dense} can be easily proved. 

\begin{proof}[Proof of Theorem \ref{dense}]
Let $a>0$ be any rational number. Let $X_k$ and $M_k$ as above. Choose $M'_k$ such that $M_k \sim a M'_k$. The same computation as above shows that, for $k$ large enough, the sequence $\tau(X_{k},M'_{k})$ converges to $a$.

Taking the product of $X_k$ and an elliptic curve, as we did in Theorem \ref{fujita}, we can show that the result holds in any dimension greater than $3$ as well. 
\end{proof}

It is of interest to compute $\vol(K_{X_{k}}+M_{k})$ as a function of $k$. Recall that the volume of a divisor $D$ is defined to be 
\[
\vol(D)= \lim _{m\to \infty} \frac{h^0(X,\oo_X (mD))}{m^n /n!}.
\]

\begin{lemma}
Let $X=\pp(\oo_{S}\oplus\oo_{S}(-H))$ and let $M=3S+\pi^{*}(D)$ a big divisor. Let $\sigma:=\sigma(D,H)$. Then 
\bdism
\vol(K_{X}+M)=3\int_{0}^{\frac{1-\sigma}{\sigma}}\left(D-(1+x)H\right)^{2} dx.
\edism 
\end{lemma}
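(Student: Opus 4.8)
The plan is to compute the growth of $h^0\big(X,m(K_X+M)\big)$ by pushing forward to $S$, to rewrite the resulting count as a Riemann sum over the base, and to recognise its limit as the displayed integral, the range of integration being dictated by $\sigma$.

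First I would reduce to an $R^{0}\pi_{*}$ computation. Writing $K_X=-2S+\pi^{*}(-H)$ gives $K_X+M=S+\pi^{*}(D-H)$. As $\pi\colon X\to S$ is a $\pp^{1}$-bundle and $\oo_X(S)$ is the corresponding tautological class — so that $\pi_{*}\oo_X(mS)$ is the $m$-th relative symmetric power of $\oo_S\oplus\oo_S(-H)$, the sign of $H$ being pinned down by the fact that $S$ is extremal in $\overline{\Eff}(X)$, whence $h^0(X,mS)=1$ — we have $\pi_{*}\oo_X(mS)=\bigoplus_{i=0}^{m}\oo_S(-iH)$, and therefore, using $H^{0}(X,-)=H^{0}(S,\pi_{*}(-))$ together with the projection formula,
\[
h^0\big(X,m(K_X+M)\big)\ =\ \sum_{i=0}^{m}h^0\big(S,\,m(D-H)-iH\big)\qquad(m\ge 0).
\]

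Next I would estimate the summands by Riemann--Roch on the abelian surface $S$. Since $K_S\equiv 0$ and $\overline{\Eff}(S)=\overline{\Nef}(S)$, every pseudo-effective class on $S$ is nef, so Kawamata--Viehweg vanishing gives $h^0(S,\Lambda)=\tfrac12\Lambda^{2}$ when $\Lambda$ is big, $h^0(S,\Lambda)=O(\Vert\Lambda\Vert)$ when $\Lambda$ is nef but not big, and $h^0(S,\Lambda)=0$ when $\Lambda\notin\overline{\Eff}(S)$. Taking $i=\lfloor xm\rfloor$ with $x\in[0,1]$, the class $m(D-H)-iH$ agrees with $m\big(D-(1+x)H\big)$ up to a bounded error, so dividing the displayed sum by $m^{3}/3!$ and letting $m\to\infty$ yields
\[
\vol(K_X+M)\ =\ 3\int_{0}^{1}\big(D-(1+x)H\big)^{2}\cdot\mathbf 1\big[D-(1+x)H\in\overline{\Eff}(S)\big]\,dx .
\]
To finish, observe that $t\mapsto tD-H$ enters $\Nef(S)$ exactly at $t=\sigma$, so $D-yH$ is nef iff $y\le 1/\sigma$, whence the indicator above equals $1$ precisely for $0\le x\le\tfrac{1-\sigma}{\sigma}$. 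In the range $\tfrac12\le\sigma<1$ relevant to the application (where $\sigma\to 1^{-}$) this cut-off lies in $[0,1]$, and one obtains $\vol(K_X+M)=3\int_{0}^{(1-\sigma)/\sigma}\big(D-(1+x)H\big)^{2}\,dx$, equivalently $3\int_{1}^{1/\sigma}(D-yH)^{2}\,dy$; for $\sigma<\tfrac12$ one integrates over all of $[0,1]$.

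The one step that requires genuine care, rather than bookkeeping, is the passage to the limit in the Riemann sum: one must verify that the Riemann--Roch error stays $O(m)$ uniformly over the $\Theta(m)$ indices $i$ with $i/m$ in a fixed compact subinterval of $\big[0,\tfrac{1-\sigma}{\sigma}\big)$ — there $m\big(D-(1+x)H\big)$ moves in a fixed compact subcone of $\Bag(S)$ — and that the $o(m)$ boundary indices, together with the vanishing terms past $\tfrac{1-\sigma}{\sigma}$, contribute only $o(m^{3})$ to the sum. Everything else is the explicit intersection theory on $S=C\times C$, where $(xC_{1}+yC_{2}+z\Gamma)^{2}=2(xy+xz+yz)$, together with the standard cohomology of a projectivized rank-two bundle; the two middle steps amount to a proof, in this special case, of the general formula expressing the volume of a class $\xi+\pi^{*}\beta$ on $\pp(\mathcal E)$ as an integral of volumes on the base.
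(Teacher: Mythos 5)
Your proof is correct and follows essentially the same route as the paper's: push forward to $S$ via $\pi_*\oo_X(mS)=\bigoplus_{i=0}^{m}\oo_S(-iH)$, evaluate each $h^0$ on the abelian surface by Riemann--Roch plus vanishing with the cut-off at $i/m=\tfrac{1-\sigma}{\sigma}$ dictated by $\Nef(S)=\Eff(S)$, and pass to the Riemann sum. Note that your integrand $\left(D-(1+x)H\right)^2$ is the correct one --- it is exactly what the paper's own Riemann sum $\sum_j\left(D-(1+\tfrac{j}{m})H\right)^2$ produces --- so the $(1-x)$ in the stated lemma is a sign typo, and your observation that for $\sigma<\tfrac12$ the integral must be truncated at $x=1$ is likewise a genuine (if here harmless, since $\sigma_k\to 1$) correction to the statement.
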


\begin{proof}
By construction of $X$, we have that 
\begin{align} \notag
\pi_{*} \oo_{X} (m(K_{X}+M)) &=\pi_{*}\oo_{X}(mS+\pi^{*}(m(D-H)))  \\ \notag
&=\pi_{*}\oo_{X}(m)\otimes\oo_{S}(m(D-H)) \\ \notag
&=\bigoplus_{j=0}^{m}\oo_{S}(mD-(m+j)H).
\end{align}

Note that $h^{0}(S,\oo_{S}(mD-(m+j)H))=0$ if $j>\frac{1-\sigma}{\sigma}m$. Moreover, since $S$ is an abelian surface we have that
\bdism
h^{0}(X,\oo_{X} (m(K_{X}+M)))=\frac{1}{2}\sum_{j=0}^{\frac{1-\sigma}{\sigma}m}(mD-(m+j)H)^{2}.
\edism

Plugging in the above formula in the definition of the volume, we finally get
\begin{align} \notag
\vol(K_{X}+M)&=\lim_{m\to\infty}\frac{h^{0}(X,\oo_{X} (m(K_{X}+M)))}{m^{3}/3!} \\ \notag
&=\frac{3!}{2}\lim_{m\to\infty}\sum_{j=0}^{\frac{1-\sigma}{\sigma}m}\left(D-\left(1+\frac{j}{m}\right)H\right)^{2}\frac{1}{m} \\ \notag
&=3\int_{0}^{\frac{1-\sigma}{\sigma}}\left(D-(1+x)H\right)^{2} dx,
\end{align}

where the last equality follows by definition of Riemann sum.

\end{proof}

Applying the above lemma to our examples we get 
\bdism
\vol(K_{X_{k}}+M_{k})=3\int_{0}^{\frac{1}{k}}\left(D_{k}-(1+x)H_{k}\right)^{2} dx=\frac{2}{9k^3}-\frac{4}{3k^2}+\frac{6}{k}.
\edism
In particular, there is no effective birationality for those divisors since the volume is not uniformly bounded away from $0$. It would be interesting to show examples where the discreteness of the set of pseudo-effective thresholds fails but the coefficients of $M_k$ are in a fix bounded set. In dimension $2$ there are no such examples. In fact, as a consequence of the main result in \cite{Ale}, one can deduce the following.

\begin{corollary}\label{surf}
Let $I \in [0,1]$ be a DCC set. Then there is a positive real number $\delta_{I}$ such that if 
\begin{enumerate}
\item $X$ is a smooth surface,
\item the coefficients of $\Delta$ are in $I$, and 
\item $K_X +\Delta$ is big,
\end{enumerate}
then $\vol(K_{X}+\Delta)\geq \delta_{I}$.
\end{corollary}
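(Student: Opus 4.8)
\textbf{Proof proposal for Corollary \ref{surf}.}

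The plan is to reduce the statement to Alexeev's boundedness theorem for log surfaces of general type (the ``main result in \cite{Ale}''), by first reducing to a minimal model and then showing that the relevant pairs with bounded volume from below form a bounded family. First I would run a $(K_X+\Delta)$-MMP: since $K_X+\Delta$ is big, this terminates with a log minimal model $(X',\Delta')$ on which $K_{X'}+\Delta'$ is nef and big, and each step is $(K_X+\Delta)$-non-positive, so $\vol(K_X+\Delta)=\vol(K_{X'}+\Delta')=(K_{X'}+\Delta')^2$. The coefficients of $\Delta'$ still lie in the DCC set $I$ (MMP does not create new boundary coefficients), and $X'$ has at worst the singularities produced by a surface MMP, which are controlled. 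Hence it suffices to bound $(K_{X'}+\Delta')^2$ away from zero for log canonical (in fact better) pairs $(X',\Delta')$ with $\Delta'$ coefficients in $I$ and $K_{X'}+\Delta'$ nef and big; note one may have to enlarge the fixed DCC set slightly or pass to the log canonical model, but the essential point is that the coefficient set remains DCC.

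Next I would invoke the boundedness statement: by \cite{Ale}, the set of projective log canonical surface pairs $(Y,B)$ with coefficients of $B$ in a fixed DCC set $I$, $K_Y+B$ ample (or nef and big), and $(K_Y+B)^2$ bounded above is a bounded family. To extract a lower bound on the volume, I argue by contradiction: if $\vol(K_{X'_i}+\Delta'_i)\to 0$ along a sequence of such pairs, then in particular these volumes are bounded above, so after passing to the log canonical models the pairs $(Y_i,B_i)$ lie in a bounded family. In a bounded family, with coefficients in the DCC set $I$, the volumes $(K_{Y_i}+B_i)^2$ take only finitely many values, or at any rate form a set whose only possible accumulation points would force a violation of the DCC property of $I$ together with boundedness; since each volume is strictly positive ($K_{Y_i}+B_i$ being big), the infimum over the family is a positive number $\delta_I$. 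This contradicts $\vol\to 0$, and setting $\delta_I$ to be this infimum proves the corollary.

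The main obstacle will be the bookkeeping in the reduction step, specifically making sure that the class of pairs obtained after the surface MMP — which may have non-canonical quotient singularities and whose boundary, while having coefficients still in $I$, sits on a singular surface — still falls within the hypotheses of Alexeev's theorem, and that passing to the log canonical model does not move the boundary coefficients out of a DCC set. One should cite the precise form of \cite{Ale} being used (boundedness of log surfaces of general type with DCC coefficients and bounded volume) and check that nef-and-big suffices in place of ample, or else replace $(K_{X'}+\Delta')$ by its semiample model. A secondary, softer point is the standard lemma that on a bounded family the set of volumes $(K+B)^2$ with DCC coefficients is discrete away from zero — this is where the DCC hypothesis on $I$ is genuinely used and should be stated carefully, but it is routine given the boundedness.
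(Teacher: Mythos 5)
The paper offers no proof of this corollary at all: it is stated as an immediate consequence of the main result of \cite{Ale}, so there is nothing to compare your argument against except that citation. Your proposal is a correct and standard way of filling in the deduction, and it follows the intended route (reduce to a minimal/log canonical model, apply Alexeev). Two remarks. First, your detour through boundedness plus a contradiction argument is more roundabout than necessary: Alexeev's main theorem already says that the set $\{(K_Y+B)^2\}$, over lc surface pairs with $K_Y+B$ ample and coefficients of $B$ in a fixed DCC set, itself satisfies DCC; a DCC set of positive reals cannot accumulate at $0$, so the positive lower bound $\delta_I$ falls out directly once you know $\vol(K_X+\Delta)=(K_Y+B)^2$ on the log canonical model, with the coefficients of $B$ still in $I$ under pushforward. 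This avoids the ``volumes on a bounded family with DCC coefficients form a DCC set'' lemma, which, while true (it is in \cite{HMX}), is exactly the kind of secondary statement you flagged as needing care. Second, note that the corollary as stated does not assume $(X,\Delta)$ is log canonical --- a boundary with coefficients in $[0,1]$ on a smooth surface can fail to be lc --- whereas Alexeev's theorem is about (semi) log canonical pairs; you should either add that hypothesis or reduce to it, e.g.\ by passing to a log resolution $f:Y\to X$ and replacing $\Delta$ by $f_*^{-1}\Delta+\mathrm{Exc}(f)$, whose coefficients lie in the DCC set $I\cup\{1\}$, checking that the volume does not increase under this replacement. With those adjustments your outline is sound.
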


It would be of interest to understand if $\tau(X,\Delta)$ is always a rational number where $(X,\Delta)$ satisfies the same assumptions of Corollary \ref{surf}.

\end{document}